\newtheoremstyle{Break}	
  {\topsep}		
  {\topsep}		
  {}			
  {}			
  {\bfseries}	
  {.}			
  {\newline}	
  {}			
\newtheorem*{Thm*}{Theorem}
\newtheorem{Thm}{Theorem}
\newtheorem*{Pb*}{Problem}
\theoremstyle{Break}
\newtheorem*{Rm*}{Remark}
\title{Random Reordering in SOR-Type Methods}
\author[1]{Peter Oswald \thanks{agp.oswald@gmail.com}}
\author[2]{Weiqi Zhou \thanks{weiqizhou@mathematik.uni-marburg.de}}
\affil[1]{\small Institute for Numerical Simulation, University of Bonn}
\affil[2]{\small FB Mathematics and Informatics, Philipps-University Marburg}
\date{}							
\begin{document}

\maketitle

\begin{abstract}
\noindent
When iteratively solving linear systems $By=b$ with Hermitian positive semi-definite $B$, and in particular when solving least-squares problems for $Ax=b$ by reformulating them as $AA^\ast y=b$, it is often observed that SOR type methods (Gau\ss-Seidel, Kaczmarz) perform suboptimally for the given equation ordering, and that random reordering improves the situation on average. This paper is an attempt to provide some additional theoretical support for this phenomenon. We show error bounds for two randomized versions, called shuffled and preshuffled SOR, that improve asymptotically upon the best known bounds for SOR with cyclic ordering. Our results are based on studying the behavior of the triangular truncation of Hermitian matrices with respect to their permutations.\\[1ex]
{\bf Keywords}: SOR method, Kaczmarz method, random ordering, triangular truncation, convergence estimates.\\[1ex]
{\bf 2010 MSC}: 65F10, 15A60.
\end{abstract}

\section{Introduction}\label{sec1}

In this paper, we discuss the influence of the equation ordering in a linear system $By=b$ on deriving upper bounds for the convergence speed of the classical successive over-relaxation (SOR) method. We assume that $B$ is a complex $n\times n$ Hermitian positive semi-definite matrix with positive diagonal part $D$. If we write $B=L+D+L^{\ast}$, where $L$ denotes the strictly lower triangular part of $B$ and $\ast$ stands for Hermitian conjugation, then one step of the classical SOR iteration reads
\begin{equation}\label{SOR}
y^{(k+1)}=y^{(k)}+\omega (D+\omega L)^{-1}(b-By^{(k)}), \qquad k=0,1,\ldots.
\end{equation}
The classical Gau\ss-Seidel method for solving $By=b$ emerges if one takes $\omega=1$. If one attempts to solve a general linear system $Ax=b$ in the least-squares sense, then one has the choice to apply the SOR method to either the normal equation
$A^{\ast} A x = A^{\ast} b$ or to $AA^{\ast} y = b$. In the latter case, the algorithm resulting from applying (\ref{SOR}) to $B=AA^\ast$ is equivalent to the Kaczmarz method (here approximations to the solution of $Ax=b$ are recovered by setting $x^{(k)}=A^{\ast} y^{(k)}$), see \cite{hackbusch1994iterative}. 
\\[1ex]
To make the paper more readable and avoid technical detail, we make two additional assumptions. First,   
we consider only consistent systems ($b\in \mathrm{Ran}(B)$). This guarantees convergence of (\ref{SOR}) for any $0<\omega<2$ and any $y^{(0)}$ to a solution of $By=b$, while for inconsistent systems the method diverges (this does not contradict the known convergence of the Kaczmarz method for inconsistent systems
$Ax=b$ since the divergence manifests itself only in the $\mathrm{Ker}(B)=\mathrm{Ker}(AA^\ast)$ component of $y^{(k)}$ which is annihilated when recovering $x^{(k)}=A^\ast y^{(k)}$). Secondly, we assume that $B$ has unit diagonal ($D$=$I$) which can always be achieved by transforming to the equivalent rescaled system $D^{-1/2}BD^{-1/2}\tilde{y}=D^{-1/2}b$ (for the Kaczmarz algorithm, one would simply use row normalization in $A$). Alternatively, the analysis of the SOR method can be carried out with arbitrary $D>0$, with minor changes in some places, see \cite{oswald2015convergence} for some details. With both approaches, $D$ enters the final results via the spectral properties of the transformed $B$ or its norm, respectively. Note that with $D=I$, one step of (\ref{SOR}) consists of $n$ consecutive projection steps onto the $i$-th coordinate direction, $i=1,2,\ldots,n$, and the method thus becomes an instance of the alternating direction method (ADM). 
Unless stated otherwise, these two assumptions are silently assumed from now on.
\\[1ex]
Since any positive semi-definite $B$ can be factored, in a non-unique way, as 
$$
B=AA^{\ast}, 
$$
we can always assume that $B$ is produced by some $n\times m$ matrix $A$ with unit norm rows. Denote by  $r=\mathrm{rank}(B)\le \min(n,m)$ its rank, the spectral properties of $A$ and $B$ are obviously related: The non-zero eigenvalues $\lambda_1 \ge \lambda_2\ge \ldots\ge\lambda_r$ of $B$ are given by the squares of the non-zero singular values of $A$. Thus, if we define the essential condition number $\bar{\kappa}(A)$ of a matrix as the quotient of its largest and smallest non-zero singular values then
$$
\bar{\kappa}:=\bar{\kappa}(B)=\bar{\kappa}(A)^2=\frac{\lambda_1}{\lambda_r}.
$$
The unit diagonal assumption $D=I$ for $B$ implies $0<\lambda_r\le 1\le \lambda_1\le n$.
In the convergence analysis below, we will use the energy semi-norm $|y|_B = \langle By,y\rangle^{1/2}=\|A^{\ast} y\|^{1/2}$ associated with $B$, it is a norm if and only if $B$ is non-singular, i.e., positive definite. Here, $\langle \cdot,\cdot\rangle$ and $\|\cdot\|$ denote the usual Euclidian scalar product and norm
in $\mathbb{C}^n$, respectively. Later, we will use the notation $\|\cdot\|$
also for matrices (then it stands for their spectral norm) which should be clear from the context and not lead to any confusion.
\\[1ex]
Condition numbers and other spectral properties often enter the asymptotic error estimates of iterative schemes for solving linear systems, the best known examples are the standard bounds for the Jacobi-Richardson and conjugate gradient methods for systems with positive definite $B$, see e.g., \cite{hackbusch1994iterative}. For the SOR method, such upper estimates have been established in \cite{oswald1994convergence} for non-singular $B$, and recently improved in
\cite{oswald2015convergence} within the framework of the Kaczmarz iteration to include the semi-definite case:

\begin{Thm}\label{theo1}
Let $B$ be a given $n\times n$ Hermitian positive semi-definite matrix with unit diagonal, and assume that $By=b$ is consistent, i.e., possesses at least one solution $\bar{y}$. Then the SOR iteration (\ref{SOR}) converges for $0<\omega<2$ in the energy semi-norm associated with $B$ according to
\begin{equation}\label{SORest}
|\bar{y}-y^{(k)}|_B^2 \le (1-\frac{(2-\omega)\omega \lambda_1}{(1+\frac12\lfloor\log_2(2n)\rfloor \omega \lambda_1)^2\bar{\kappa}})^k |\bar{y}-y^{(0)}|_B^2, \qquad k\ge 1.
\end{equation}
If $B$ is singular, then for sufficiently small rank $r$ the term $\frac12\log_2(2n)$ can be replaced by the smaller term $C_0\ln r$, where $C_0$ is an absolute constant. 
\end{Thm}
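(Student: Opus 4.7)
The plan is to establish a one-step contraction $|e^{(k+1)}|_B^2 \le (1-\rho)|e^{(k)}|_B^2$, where $e^{(k)} := \bar{y} - y^{(k)}$ and $\rho$ matches the bracketed factor in (\ref{SORest}); the full statement then follows by iterating $k$ times.

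\textbf{Error recurrence and an exact energy identity.} Substituting $b = B\bar{y}$ into (\ref{SOR}) yields
$$e^{(k+1)} = e^{(k)} - \omega(I+\omega L)^{-1}Be^{(k)} =: e^{(k)} - \omega z.$$
I would expand $|e^{(k+1)}|_B^2 = \langle Be^{(k+1)}, e^{(k+1)}\rangle$ and rewrite the cross term using $Be^{(k)} = (I+\omega L)z$. The key algebraic input is the unit-diagonal decomposition $L + L^{\ast} = B - I$, which makes the $\omega^2\langle Bz,z\rangle$ contributions cancel, leaving the clean SOR energy identity
$$|e^{(k+1)}|_B^2 = |e^{(k)}|_B^2 - \omega(2-\omega)\|z\|^2.$$

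\textbf{Chaining two lower bounds on $\|z\|^2$.} From $Be^{(k)} = (I+\omega L)z$ one immediately gets $\|z\| \ge \|Be^{(k)}\|/\|I+\omega L\|$. Expanding $e^{(k)}$ in an orthonormal eigenbasis of $B$ (components in $\mathrm{Ker}(B)$ contribute zero on both sides) and applying a Rayleigh-quotient estimate yields $\|Be^{(k)}\|^2 \ge \lambda_r\,|e^{(k)}|_B^2$, so
$$\|z\|^2 \ge \frac{\lambda_r}{\|I+\omega L\|^2}\,|e^{(k)}|_B^2.$$

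\textbf{The main obstacle: bounding $\|I+\omega L\|$.} The crux of the theorem is now reduced to estimating $\|L\|$, the spectral norm of the strictly lower triangular part of the Hermitian matrix $B$, purely in terms of $\lambda_1 = \|B\|$ and $n$. This is precisely the triangular truncation problem, and the classical inequality (Kwapie\'n--Pe\l czy\'nski; see Bhatia, \emph{Matrix Analysis}, Ch.~IV) gives
$$\|L\| \le \tfrac12\lfloor\log_2(2n)\rfloor\,\|B\|.$$
Hence $\|I+\omega L\| \le 1 + \tfrac12\lfloor\log_2(2n)\rfloor\omega\lambda_1$, and substituting back with $\lambda_r = \lambda_1/\bar{\kappa}$ produces exactly the contraction factor displayed in (\ref{SORest}).

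\textbf{The rank-$r$ refinement.} For singular $B$ of small rank $r$, I would appeal to a sharpened triangular truncation bound that only sees the effective rank: for Hermitian $B = AA^{\ast}$ with $A$ of size $n\times r$, the strictly lower triangular part satisfies $\|L\| \le C_0\ln r\cdot\|B\|$ with an absolute constant $C_0$, which can be derived by a dyadic block decomposition applied to the rank-$r$ factorization. Inserting this into the previous step yields the stated replacement of $\tfrac12\log_2(2n)$ by $C_0\ln r$ whenever the latter is actually smaller.
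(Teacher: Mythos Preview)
Your proposal is correct and follows essentially the same route as the paper: the paper sketches the proof by deriving the energy identity $|Qy|_B^2 = |y|_B^2 - \omega(2-\omega)\|(I+\omega L)^{-1}By\|^2$, lower-bounding the last term via $\|I+\omega L\|$ and $\|By\|^2\ge \lambda_r|y|_B^2$, and then invoking the triangular-truncation bounds $\|L\|\le \tfrac12\lfloor\log_2(2n)\rfloor\|B\|$ (from \cite{oswald1994convergence}) resp.\ $\|L\|\le C_0\ln r\,\|B\|$ (from \cite{oswald2015convergence}) --- exactly your steps. The only difference is bibliographic: the paper attributes the $\log n$ truncation estimate to \cite{oswald1994convergence} rather than Kwapie\'n--Pe\l czy\'nski/Bhatia, and the $\ln r$ refinement is quoted from \cite{oswald2015convergence} rather than re-derived.
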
 

The proof of (\ref{SORest}) rests on rewriting the squared energy semi-norm of $Q y$, where
$$
Q=I-\omega(I+\omega L)^{-1}B
$$
is the error iteration matrix associated with (\ref{SOR}), as
$$
|Q y|^2_B =|y|_B^2 - \omega(2-\omega)\|(I+\omega L)^{-1}By\|^2\le |y|_B^2 - \frac{\omega(2-\omega)\|By\|^2}{\|I+\omega L\|^2}.
$$
and using a spectral norm inequality for $L$ from  \cite{oswald1994convergence},
\begin{equation}\label{LOGest}
\|L\| \le \frac12\lfloor\log_2(2n)\rfloor \|B\|,
\end{equation}
to estimate the term $\|I+\omega L\|\le 1+\omega \|L\|$. For singular $B$ with small rank $r<n$, the estimate (\ref{LOGest}) has been improved in
\cite{oswald2015convergence} to 
\begin{equation}\label{LOGestr}
\|L\| \le C_0 \ln r \|B\|, \quad r\ge 2,
\end{equation}
where $C_0$ is a fixed positive constant. It is well known that the estimate (\ref{LOGest}) is sharp in its logarithmic dependency on $n$, more precisely
$$
b_n:=\sup_{B\neq 0} \frac{\|L\|}{\|B\|} \asymp \frac{1}{\pi}\ln n,\qquad n\to \infty,
$$
where the supremum is taken with respect to all $n\times n$ matrices $B$, and $\asymp$ stands for asymptotic equality (see \cite{davidson1988nest, mathias1993hadamard} for sharp estimates and examples). Similar lower estimates hold also for Hermitian positive semi-definite matrices $B$ with unit diagonal $D=I$,
and examples exist that the necessity of the logarithmic terms in (\ref{SORest}), see \cite{oswald1994convergence}.
\\[1ex]
For non-singular $B$, i.e., when $|\cdot|_B$ becomes a norm and the system has full rank $r=n$, the outlined idea of proof for Theorem \ref{theo1} has been carried out in detail in \cite{oswald1994convergence}. The changes for singular $B$ are minimal, the proof of (\ref{LOGestr}) for this case can be found in \cite[Theorem 4]{oswald2015convergence}, see also the proof of Part b) of Theorem \ref{ThmSOR} in Section \ref{sec3}. 
\\[1ex]
The crucial inequalities (\ref{LOGest}) and (\ref{LOGestr}), and consequently the error bounds in Theorem \ref{theo1}, suffer from one serious drawback:
They are invariant under simultaneously reordering rows and columns in $B=AA^\ast$ resp. reordering the rows in $A$. Indeed, $B_{\sigma} =P_{\sigma} B P^{\ast}_{\sigma}$ has the same spectrum and spectral norm as $B$ for any permutation $\sigma$ of the index set $\{1,\ldots,n\}$ ($P_{\sigma}$ denotes the associated $n\times n$ row permutation matrix), while the spectral properties of the lower triangular part $L_{\sigma}$  of $B_{\sigma}$ depend on $\sigma$. As a matter of fact, in practice it is often observed (for example, see \cite{young1954iterative,varga1959orderings,feichtinger1994theory}) that reordering improves the convergence behavior of SOR methods as well as other, more general, alternating directions, subspace correction, and projection onto convex sets (POCS) methods. The interest in explaining this observation theoretically has been further stimulated by convergence results for a randomized Kaczmarz iteration in \cite{strohmer2009randomized}. In the language of SOR for solving a consistent system $By=b$ with $D=I$, instead of performing the 
$n$ consecutive projection steps on the $i$-th coordinate that compose the SOR iteration step (\ref{SOR}) in the fixed order $i=1,2,\ldots,n$, the method in \cite{strohmer2009randomized} performs the projection steps on coordinate directions 
by randomly selecting $i$ uniformly and independently from $\{1,\ldots,n\}$ in each single step. For a fair comparison with the original SOR iteration (\ref{SOR}), and the randomized SOR methods discussed below, it is appropriate to combine
$n$ single projection steps on randomly and independently chosen coordinate directions into one iteration step. The iterates 
$y^{(k)}$ of this method which we call for short {\it single step randomized SOR iteration} are now random variables. Under the same assumptions as in Theorem \ref{theo1}, the following estimate for the expectation of the squared energy semi-norm error
can be deduced from \cite{strohmer2009randomized}:
\begin{equation}\label{SVest}
\mathbb{E}(|{y}^{(k)}-y^\ast|_B^2) \le \left(1-\frac{(2-\omega)\omega\lambda_1}{n\bar{\kappa}}\right)^{kn}
|{y}^{(0)}-y^\ast|_B^2,\qquad k\ge 1.
\end{equation}
The two upper estimates (\ref{SORest}) and (\ref{SVest}) are obtained by different techniques, and although a rough comparison of
the upper bounds suggests that the single step randomized SOR beats the original SOR, in practice this is generally not true, and depends on
the given system and the ordering of the equations in it.
\\[1ex]
In this paper, we consider two different randomization strategies for SOR closer to the original method. In the first, given the $k$-th iterate $y^{(k)}$, we choose (independently and randomly) a permutation $\sigma$ of $\{1,\ldots,n\}$, and do one full SOR iteration step (\ref{SOR}) with $By=b$ and $y^{(k)}$ replaced by $B_{\sigma} y_{\sigma} = b_{\sigma}$ and $y^{(k)}_\sigma =P_\sigma y^{(k)}$, where $y_{\sigma} =P_{\sigma} y$, $b_{\sigma} = P_{\sigma} b$. Then the original order is restored by setting $y^{(k+1)} =P^\ast_\sigma y^{(k+1)}_\sigma$.  This approach which we call for short {\it shuffled SOR iteration} is equivalent to a random ordering without repetition in each sweep of $n$ steps of the single step randomized SOR iteration. In practice, random ordering without repetition is considered superior to random ordering with repetition although theoretical proof for this observation is yet missing, see the conjectures in \cite{recht2012beneath,duchi2012}. In \cite{wright2015}, where the counterpart of the shuffled SOR iteration for coordinate descent methods in convex optimization appears as
algorithm EPOCHS, similar statements can be found.
\\[1ex]
It is also tempting to investigate the effect of a one-time reordering, followed by the application of the SOR iteration in the classical, cyclic fashion (we call this {\it preshuffled SOR iteration}). In other words, the preshuffled SOR iteration coincides with a shuffled SOR iteration if we reuse the randomly generated $\sigma$ from the iteration step at $k=0$ for all further iteration steps at $k>0$. Observe that in terms of the Kaczmarz iteration these two schemes merely correspond to shuffling the rows in the row-normalized matrix $A$, i.e., $Ax=b$ is replaced by $P_{\sigma} Ax=P_{\sigma} b$. The numerical experiments presented in \cite{oswald2015convergence} suggest that shuffled and preshuffled SOR iterations often perform in expectation equally good, and better than the single step randomized iteration.
\\[1ex]
The present paper is an attempt to gain some insight into what can be expected from these randomization strategies.  Speaking in mathematical terms, if 
$$
Q_{\sigma}=(I+\omega L_{\sigma})^{-1}((1-\omega)I-\omega L^{\ast}_{\sigma}),
$$ 
denotes the error iteration matrix of the SOR method applied to $B_{\sigma} y_{\sigma}=b_{\sigma}$, then we aim at investigating the quantity
\begin{equation} \label{ExpVal2}
\mathbb{E}[|Qy|_{B}^2]:=\frac{1}{n!}\sum_{\sigma} |Q_{\sigma} y_{\sigma}|_{B_{\sigma}}^2, \qquad |y|_B=1,
\end{equation}
to obtain upper bounds for the expected square energy semi-norm error in the shuffled SOR iteration. 
\\[1ex]
As was outlined above, obtaining estimates for the norm behavior of $Q_\sigma$, and of relevant averages such as (\ref{ExpVal2}), must be closely related to studying the behavior of $L_{\sigma}$ which will be at the heart of our considerations in Section \ref{sec2}. In particular, we apply a corollary of the recently proved paving conjecture to show that for any positive semi-definite $B$ with $D=I$ there is a permutation $\sigma$ (depending on $B$) with the property
\begin{equation}\label{KSL}
\|L_{\sigma}\| \le C_1\|B\|,
\end{equation}
where $C_1$ is an absolute constant.
We further establish that 
\begin{equation} \label{EqLU}
\|\mathbb{E}[LL^{\ast}]\|<\|B\|^2,
\qquad \mathbb{E}[LL^{\ast}]:=\frac{1}{n!}\sum_{\sigma} P_{\sigma}^{\ast}L_{\sigma}L_{\sigma}^{\ast}P_{\sigma},
\end{equation}
which will lead to bounds for (\ref{ExpVal2}). 
\\[1ex] 
In Section \ref{sec3}, we apply the results of Section \ref{sec2} to establish two new error decay bounds for the above mentioned shuffled SOR iterations. First of all, we show that the quantity in (\ref{ExpVal2}) satisfies
$$ 
\mathbb{E}(|Qy|_{B}^2) \le (1-\frac{(2-\omega)\omega \lambda_1}{(1+\omega \lambda_1)^2\bar{\kappa}(B)})|y|_B^2,
$$
which implies a bound for the expected square energy semi-norm error decay of the shuffled SOR iteration that compares favorably with the bounds in
Theorem \ref{theo1}, as the logarithmic dependence on $n$ and $r$ is removed. Next, we prove using (\ref{KSL}) that there exists a $\sigma$ such that the preshuffled SOR iteration can achieve the same effect, i.e., replacing the $\frac12\lfloor\log_2(2(n-1)\rfloor$ resp. $C_0\ln r$ factor by the constant $C_1$ from (\ref{KSL}).
Although asymptotic in nature, and in case of the preshuffled SOR iteration due to the currently available estimates for $C_1$ not yet practical, the bounds established in Theorem \ref{ThmSOR} should be viewed as theoretical support for the numerically observed convergence behavior of shuffled and preshuffled SOR iterations.

\section{Triangular Truncation and Reordering}\label{sec2}

If not stated otherwise, in this section $B=L+D+L^\ast$ belongs to $\mathscr H_n$, the set of all $n\times n$ Hermitian matrices, with no assumptions on positive semi-definiteness or normalization of its diagonal elements (i.e., not assuming $D=I$). The notation of Section \ref{sec1} is reused for this slightly more general situation. 

\begin{Thm} \label{ThmSumM} If  $B\in\mathscr H_n$ then the average operator $\mathbb{E}[LL^{\ast}]$ defined in (\ref{EqLU}) satisfies
$$
\|\mathbb{E}[LL^{\ast}]\|\le 4\|B\|^2.
$$
Moreoever, if $D=I$ and $B$ is positive semi-definite, then (\ref{EqLU}) holds.
\end{Thm}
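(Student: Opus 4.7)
The plan is to move everything to the original ordering by setting $M_{\sigma} := P_{\sigma}^{\ast} L_{\sigma} P_{\sigma}$, so that $(M_{\sigma})_{kl} = b_{kl}$ when $\sigma^{-1}(k) > \sigma^{-1}(l)$ and zero otherwise; in particular $M_{\sigma}$ has zero diagonal, and $\mathbb{E}[LL^{\ast}] = \tfrac{1}{n!}\sum_{\sigma} M_{\sigma} M_{\sigma}^{\ast}$. For every $\sigma$ the structural identity $M_{\sigma} + M_{\sigma}^{\ast} = B - D$ holds, because summing the two triangular pieces restores all off-diagonal entries of $B$; this is the scaffolding on which everything else rests.

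Next I would compute $\mathbb{E}[M_{\sigma} M_{\sigma}^{\ast}]$ entry by entry. Since $M_{\sigma}$ has zero diagonal, the only contributions to $(M_{\sigma} M_{\sigma}^{\ast})_{kl} = \sum_{j} (M_{\sigma})_{kj}\overline{(M_{\sigma})_{lj}}$ come from $j \notin \{k,l\}$, and the summand equals $b_{kj}\overline{b_{lj}}$ exactly when $\sigma$ places $j$ before both $k$ and $l$. For uniform $\sigma$ this event has probability $\tfrac{1}{3}$ when $j,k,l$ are distinct (by symmetry among the three indices), while in the diagonal case $k=l$ the event that $j$ precedes $k$ has probability $\tfrac{1}{2}$. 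These elementary counts collapse to the closed form
\[
\mathbb{E}[LL^{\ast}] \;=\; \tfrac{1}{3}(B-D)^{2} + \tfrac{1}{6}\,\mathrm{diag}\!\bigl((B-D)^{2}\bigr),
\]
where $\mathrm{diag}(X)$ denotes the diagonal part of $X$.

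From this explicit formula both bounds are routine. Since $\|\mathrm{diag}(X)\| \le \|X\|$ for any matrix $X$, the triangle inequality produces $\|\mathbb{E}[LL^{\ast}]\| \le \tfrac{1}{2}\|B-D\|^{2}$; combined with $\|D\| \le \|B\|$ (the spectral norm of a Hermitian matrix dominates its diagonal entries), this gives $\|B-D\| \le 2\|B\|$ and hence the general bound (with room to spare against the constant $4$). For the strict inequality in the positive semi-definite case with $D=I$, the condition $\mathrm{tr}(B)=n$ together with $\lambda_{n} \ge 0$ forces $\|B\| = \lambda_{1} \ge 1$, and a short case analysis yields $\|B-I\| = \max(\lambda_{1}-1,\,1-\lambda_{n}) < \lambda_{1} = \|B\|$ (the borderline case $\lambda_{1}=1$, $\lambda_{n}=0$ is ruled out by the trace condition, which would force $B=I$ and hence $L=0$). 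The technical heart of the argument is the entrywise identity in the second step; the main bookkeeping obstacle is keeping the cases $j=k$, $j=l$, and $j \notin \{k,l\}$ cleanly separated and exploiting the vanishing diagonal of $M_{\sigma}$ to discard the first two.
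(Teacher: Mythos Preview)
Your argument is correct and follows the same skeleton as the paper's proof: pass to $H=B-D$, compute the entrywise average of $P_{\sigma}^{\ast}L_{\sigma}L_{\sigma}^{\ast}P_{\sigma}$ by a permutation count, and finish with $\|H\|\le 2\|B\|$ (resp.\ $\|H\|\le \|B\|$ when $B$ is positive semi-definite with $D=I$).

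The execution differs in the closed form you obtain. The paper parametrizes the inner sum by positions and arrives at $\mathbb{E}[LL^{\ast}]=\tfrac{1}{n}\,K\circ H^{2}$ with $K_{st}=\min(s,t)-1$, giving the bound $\tfrac{n-1}{n}\|H\|^{2}$. Your probabilistic count over the relative order of three (or two) indices gives the permutation-symmetric expression
\[
\mathbb{E}[LL^{\ast}]=\tfrac{1}{3}H^{2}+\tfrac{1}{6}\,\mathrm{diag}(H^{2}),
\]
and hence the sharper bound $\tfrac{1}{2}\|H\|^{2}$. Your formula has the symmetry one expects (the average must transform covariantly under relabelling of the original indices, which $\tfrac{1}{n}K\circ H^{2}$ with a fixed $K$ does not), and a direct check for $n=2$ confirms it. Both routes establish the theorem as stated; yours yields a better constant and also makes the strict inequality in the positive semi-definite case cleaner, since you get it directly from $\|B-I\|<\|B\|$ (with the degenerate case $B=I$ handled separately) rather than from the factor $\tfrac{n-1}{n}<1$.
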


\begin{proof}
For given $B\in \mathscr H_n$, set $H=L+L^{\ast}$. Since $\|D\|\le \|B\|$, we have
\begin{equation}\label{EqHbyB}
\|H\|=\|B-D\|\le \|B\|+\|D\|\le 2\|B\|,
\end{equation}
while for positive semi-definite $B$
\begin{equation}\label{EqHbyB1}
\|H\|=\|B-I\|\le \max(\|B\|-1,1)\le \|B\|.
\end{equation}
Thus, establishing (\ref{EqLU}) with $B$ replaced by $H$ is enough.
\\[1ex]
Straightforward computation shows that
$$
(P_{\sigma}^{\ast}L_{\sigma} L^{\ast}_{\sigma}P_{\sigma})_{st}=\sum_{k=1}^{\min(s,t)-1}H_{s\sigma(k)}H_{\sigma(k)t},\qquad s,t=1,\ldots,n.
$$ 
By counting the number of permutations for which $\sigma(k)=l$ for some $k=1,\ldots,\min(s,t)-1$ we get
$$
\frac{1}{n!}\sum_{\sigma}(P_{\sigma}^{\ast}L_{\sigma} L^{\ast}_{\sigma}P_{\sigma})_{st}=\frac{(n-1)!}{n!}(\min(s,t)-1)\sum_{l=1}^nH_{sl}H_{lt}=\frac{\min(s,t)-1}{n}(H^2)_{st}.
$$
Hence
$$
\frac{1}{n!}\sum_{\sigma}(P_{\sigma}^{\ast}L_{\sigma} L^{\ast}_{\sigma}P_{\sigma})=\frac{1}{n}K\circ H^2,
$$
where $\circ$ denotes Hadamard multiplication, and 
\begin{equation} 
K=\begin{pmatrix}
0 & 0 & 0 &\ldots & 0 & 0 \\
0 & 1 & 1 &\ldots & 1 & 1 \\
0 & 1 & 2 &\ldots & 2 & 2\\
\vdots & \vdots & \vdots &\ddots &\vdots & \vdots \\
0 & 1 & 2 &\ldots  & n-2 & n-2\\
0 & 1 & 2 &\ldots  & n-2 & n-1 
\end{pmatrix}.
\end{equation}
In other words, the above Hadamard product can be written as the linear combination of $n-1$ diagonally projected submatrices of $H^2$, each of norm $\le \|H^2\|$.
This gives 
$$
\|\frac1{n!} \sum_{\sigma}P^{\ast}_{\sigma} L_{\sigma} L^{\ast}_{\sigma}P_{\sigma}\|\le\frac{n-1}{n}\|H^2\|<\|H\|^2,
$$
which completes the proof.
\end{proof}

The following result was suggested to the first author by B. Kashin (Steklov Institute, Moscow), and is included here with his permission. 

\begin{Thm}\label{ThmMinNorm}
There is an absolute constant $C_2$ such that for any $B\in \mathscr H_n$ there exists a permutation $\sigma$ for which
\begin{equation}\label{MinNorm}
\|L_{\sigma}\| \le C_2\|B\|.
\end{equation}
Moreover, if $D=I$ and $B$ is positive semi-definite then (\ref{KSL}) holds with an absolute constant $C_1\le C_2$.
\end{Thm}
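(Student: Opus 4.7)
My plan is to reduce the statement to zero-diagonal Hermitian matrices and then invoke a corollary of the paving (Kadison--Singer) conjecture, recently proved by Marcus--Spielman--Srivastava. Since $D_\sigma=P_\sigma D P_\sigma^{\ast}$ is diagonal for every permutation $\sigma$, the strictly lower triangular part of $B_\sigma$ coincides with that of $H_\sigma$, where $H:=B-D=L+L^{\ast}$. Together with the norm bounds $\|H\|\le 2\|B\|$ (general Hermitian) and $\|H\|\le\|B\|$ (positive semi-definite with $D=I$) already recorded in (\ref{EqHbyB})--(\ref{EqHbyB1}), the theorem therefore reduces to finding an absolute constant $C'$ such that every zero-diagonal Hermitian $H$ admits a permutation $\sigma$ with $\|L_\sigma(H)\|\le C'\|H\|$; from this, $C_2\le 2C'$ and $C_1\le C'\le C_2$ follow at once.

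To construct $\sigma$ I invoke the paving theorem in the form: for every $\epsilon\in(0,1)$ there is $k=k(\epsilon)$ such that every zero-diagonal $n\times n$ Hermitian $H$ admits a partition $\{1,\ldots,n\}=S_1\sqcup\cdots\sqcup S_k$ with $\max_j\|P_{S_j}HP_{S_j}\|\le \epsilon\|H\|$, where $P_{S_j}$ denotes the diagonal projection onto coordinates in $S_j$. Fix $\epsilon\in(0,1)$ once and for all, order the indices so that all of $S_1$ precedes all of $S_2$, and so on, and then apply the same construction recursively inside each block $H_{jj}:=P_{S_j}HP_{S_j}$. This produces a nested partition of $\{1,\ldots,n\}$ stratified into recursion levels $\ell=1,2,\ldots$, where the level-$\ell$ blocks have spectral norm at most $\epsilon^\ell\|H\|$ by induction on $\ell$.

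The heart of the argument is the decomposition $L_\sigma=\sum_{\ell\ge 1}M_\ell$, where $M_\ell$ collects those entries of $L_\sigma$ lying strictly below the block-diagonal at level $\ell$ but on the block-diagonal at all previous levels. At level $\ell$, $M_\ell$ is block-diagonal with respect to the level-$(\ell-1)$ partition, and within each such level-$(\ell-1)$ block it equals the block-strict-lower-triangular part of a $k\times k$ paving of a matrix of norm $\le\epsilon^{\ell-1}\|H\|$. A block-operator version of the scalar triangular truncation bound (\ref{LOGest}), which carries over with the same $O(\log k)$ constant independent of the individual block sizes, then gives
$$
\|M_\ell\|\le C\log k\cdot\epsilon^{\ell-1}\|H\|,\qquad \ell\ge 1.
$$
Summing this geometric series yields $\|L_\sigma\|\le \tfrac{C\log k}{1-\epsilon}\|H\|$, a quantity depending only on the fixed $\epsilon$ and hence qualifying as an absolute constant $C'$.

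The main obstacle will be the quantitative bookkeeping inside the recursion: I must verify that the block-level triangular truncation bound is governed by $O(\log k)$ rather than by $O(\log n)$, confirm that the condition $\epsilon<1$ prevents any paving block from coinciding with the full index set (so the nested partition genuinely refines and ultimately terminates at singletons, where $L=0$), and carefully track constants to ensure that $C_1$ and $C_2$ are independent of $n$. Once this is in place, the transition to the positive semi-definite unit-diagonal case is immediate via $\|H\|\le\|B\|$, giving $C_1\le C'\le C_2$ as asserted.
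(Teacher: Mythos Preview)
Your approach is correct and follows essentially the same route as the paper: reduce to the zero-diagonal case via (\ref{EqHbyB})--(\ref{EqHbyB1}), invoke Anderson's paving theorem (now a theorem thanks to Marcus--Spielman--Srivastava), order the paving blocks consecutively, and recurse/induct on the diagonal blocks, so that the contributions from successive levels form a geometric series in $\epsilon$ and sum to an absolute constant. The only minor variation is in how you bound the strictly block-lower-triangular piece $M_\ell$ at each level: you appeal to the $O(\log k)$ block-truncation Schur-multiplier bound, whereas the paper uses the cruder but more elementary estimate $\|L_2\|\le(\gamma-1)\|B\|$ obtained by summing the spectral norms of the $\gamma-1$ rectangular sub-block columns, arriving at $C_\epsilon=(\gamma(\epsilon)-1)/(1-\epsilon)$.
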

\begin{proof} 
Weaker versions of (\ref{MinNorm}), where the spectral norm $\|L\|=\|L\|_{\ell_2^n\to \ell_2^n}$ is replaced by $\|L\|_{\ell_2^n\to \ell_q^n}$ with $1\le q<2$, have been proved in \cite{kashin1980} and \cite{bourgaintzafriri1987}. 
\\[1ex]
For the proof of (\ref{MinNorm}) we explore the following particular result on matrix paving, which for a long time was known as Anderson's Paving Conjecture
for Hermitian matrices with small diagonal. This conjecture is equivalent to the Kadison-Singer Problem, a positive solution of which was recently given in \cite{marcus2013interlacing}. We formulate it for $B\in \mathscr H_n$  with zero diagonal, and refer to the recent expository paper \cite{casazzatremain2015} for details.
\begin{Thm*}[Anderson's Paving Conjecture] \label{LmPave} 
For any $0<\epsilon<1$, there is an integer $\gamma(\epsilon)\ge 2$ such that for any $n\in\mathbb N$ and any $B\in\mathscr H_n$ with zero diagonal, there exists a partition 
$$
w_1\cup w_2\ldots\cup w_{\gamma}=\{1,2,\ldots,n\},\qquad w_i\cap w_j = \emptyset,\quad i\neq j,
$$
into $\gamma\le \gamma(\epsilon)$ non-empty index subsets such that  
$$
\|B_{w_k}\|\le\epsilon\|B\|, \qquad k=1,\ldots,\gamma.
$$ 
Here $B_{w_k}$ is the $|w_k|\times|w_k|$ submatrix corresponding to the index set $w_k\times w_k$. 
\end{Thm*}

Returning to the proof of Theorem \ref{ThmMinNorm}, by (\ref{EqHbyB}) it is enough to consider matrices $B\in \mathscr{H}_n$ with zero diagonal.
For given $0<\epsilon<1$ we proceed by induction in $n$ to establish (\ref{MinNorm}) with a constant $C_{\epsilon}:=(\gamma(\epsilon)-1)/(1-\epsilon)$, where $\gamma(\epsilon)$ is defined in the above paving theorem. To find an estimate for the best constant $C_2$ in (\ref{MinNorm}), we then optimize with respect to $\epsilon$ resp. $\gamma$.
Any $\sigma$ will do for $n=2$ since  $C_{\epsilon}>1$ and in this case $\|L\|=\|B\|$. Suppose the statement holds for all matrix dimensions less than $n$. For $B\in\mathscr{H}_n$ with zero diagonal, consider the partition $w_1, w_2, \ldots,w_{\gamma}$ of in the above paving theorem, and denote by $\sigma_0$ the permutation that makes $B_{\sigma_0}$ contain the submatrices $B_{w_k}$ as consecutive diagonal blocks, as depicted in Figure \ref{fig1} for $\gamma=3$. Let $B_{l_k}$ be the rectangular submatrices below $B_{w_k}$ in this $B_{\sigma_0}$,  $k=1,\ldots,\gamma-1$.

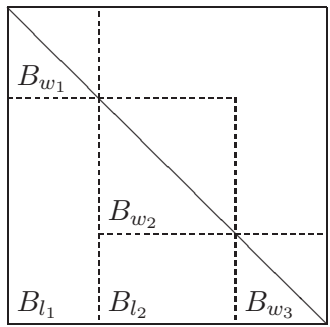
\begin{figure}[H]
\setlength{\unitlength}{0.6cm}
\begin{picture}(8,8)(-9.5, -0.5) \label{fg1}
\multiput(0,0)(7,0){2}{\line(0,1){7}}
\multiput(0,0)(0,7){2}{\line(1,0){7}}
\put(0,7){\line(1,-1){7}}

\multiput(0,5)(0.2,0){10}{\line(1,0){0.1}}
\multiput(2,5)(0,0.2){10}{\line(0,1){0.1}}
\put(0.2,5.3){$B_{w_1}$}

\multiput(2,5)(0.2,0){15}{\line(1,0){0.1}}
\multiput(2,5)(0,-0.2){15}{\line(0,-1){0.1}}
\multiput(2,2)(0.2,0){15}{\line(1,0){0.1}}
\multiput(5,5)(0,-0.2){15}{\line(0,-1){0.1}}
\put(2.2,2.3){$B_{w_2}$}

\multiput(5,2)(0.2,0){10}{\line(1,0){0.1}}
\multiput(5,2)(0,-0.2){10}{\line(0,-1){0.1}}
\put(5.2,0.3){$B_{w_3}$}

\multiput(2,2)(0,-0.2){10}{\line(0,-1){0.1}}
\put(0.2,0.3){$B_{l_1}$}
\put(2.2,0.3){$B_{l_2}$}
\end{picture}
\caption{Block structure of $B_{\sigma}$}\label{fig1}
\end{figure}

For each $k=1,\ldots,\gamma$ we have $|w_k|<n$, and by the induction assumption there exist permutations $\sigma_k$ such that 
$$
\|(L_{w_k})_{\sigma_k}\|\le C_{\epsilon}\|B_{w_k}\|, \qquad k=1,\ldots,\gamma,
$$
where $(L_{w_k})_{\sigma_k}$ is the strictly lower triangular part of $(B_{w_k})_{\sigma_k}$.
\\[1ex]
By superposing the permutations $\sigma_k$ within each block with $\sigma_0$, we get the desired $\sigma$: In each diagonal block of $B_{\sigma}$
we have now $(B_{w_k})_{\sigma_k}$ instead of $B_{w_k}$, and the rectangular submatrices $B'_{l_k}$ below the diagonal blocks are row and column permuted
copies of the previous $B_{l_k}$. 
\\[1ex]
We split $L_{\sigma}$ into the sum of a block-diagonal matrix $L_1$ containing all $(L_{w_k})_{\sigma_k}$, and another 
lower triangular matrix $L_2$
containing all rectangular submatrices $B'_{l_k}$. Since
$$
\|L_1\| \le \max_{k=1,\ldots,\gamma} \|(L_{w_k})_{\sigma_k}\| \le C_{\epsilon} \max_{k=1,\ldots,\gamma} \|B_{w_k}\| \le C_{\epsilon}\epsilon \|B\|,
$$
and 
$$
\|L_2\| \le \sum_{k=1}^{\gamma-1} \|B'_{l_k}\| \le (\gamma-1)\|B\|\le C_{\epsilon}(1-\epsilon)\|B\|
$$
(note that each $B'_{l_k}$ is a row and column permuted version of a rectangular submatrix of the original $B$, thus $\|B'_{l_k}\|\le \|B\|$). Therefore, 
\begin{equation}\label{LMinNorm}
\|L_{\sigma}\|\le \|L_1\|+\|L_2\|\le C_{\epsilon}\|B\|,
\end{equation} 
which concludes the induction step.
\\[1ex]
To find numerical estimates for $C_2$, we need bounds for $\gamma(\epsilon)$. The bounds given in \cite[Section 4]{casazzatremain2015}) are very rough, therefore we rely on Corollary 26 from Tao's blog on the Kadison-Singer problem accessible at \texttt{https://terrytao.wordpress.com/2013/11/04/} which implies the following: For
given $\gamma\ge 2$, there exists a partition into $\gamma^2$ index subsets such that the statement of Theorem \ref{LmPave} holds with 
$\epsilon=\epsilon(\gamma)=2/\gamma +2\sqrt{2/\gamma}$. For $\gamma\ge 12$, one has $\epsilon<1$, and we conclude that
$$
C_2 \le 2\inf_{0<\epsilon<1} C(\epsilon) \le 2\inf_{\gamma\ge 12} C(\epsilon(\gamma)) = 2\min_{\gamma\ge 12} \frac{\gamma^2-1}{1-2/\gamma-2\sqrt{2/\gamma}} =
2907,
$$
with the minimum achieved for $\gamma=18$. The factor $2$ comes from taking into account (\ref{EqHbyB}). This bound is overly pessimistic (note that results closer to the known lower bound $\gamma(\epsilon)\ge 1/\epsilon^2$ would result in much smaller values of $C_2$). 
\\[1ex]
It is therefore worth looking for improvements if $B$ is positive semi-definite and has unit diagonal $D=I$. Then $B-I$ is a Hermitian matrix with zero diagonal and spectrum in $[-1,\|B\|-1]$ satisfying (\ref{EqHbyB1}), and Corollary 25 of Tao's blog yields, for any $\gamma\ge 2$, the existence of a partition into 
$\gamma$ index subsets such that in Theorem \ref{LmPave} we can take $\epsilon=\epsilon'(\gamma)=1/\gamma +2/\sqrt{\gamma}$. Repeating the above proof steps for this case, we see that
$$
C_1 \le \min_{\gamma\ge 6} \frac{\gamma-1}{1-1/\gamma-2/\sqrt{\gamma}} \le 32.42
$$
(here $\epsilon'(\gamma)<1$ for $\gamma\ge 6$, and the minimum is achieved for $\gamma=12$). 
\end{proof}

It remains an open question if an inequality similar to (\ref{MinNorm}) also holds for
the average of the norms $\|L_\sigma\|$, namely if
\begin{equation}\label{ELsigma}
\mathbb{E}[\|L\|] := \frac{1}{n!}\sum_{\sigma} \|L_{\sigma}\|\le C_n\|B\|,\qquad B\in \mathscr H_n,
\end{equation}
holds for some (bounded or slowly increasing) sequence of positive constants $C_n=\mathrm{o}(\ln (n))$ (for a related result, see \cite[Theorem 8.4]{bourgaintzafriri1987}). A proof of (\ref{ELsigma}) would imply improved asymptotic estimates for the expected convergence rate of
the preshuffled SOR iteration, and not only for the best possible convergence rate, as established in Part b) of Theorem  \ref{ThmSOR} below.

\section{Application to SOR Iterations}\label{sec3}
In this section we show \textit{a priori} convergence estimates for the shuffled and preshuffled SOR iterations that improve upon the one for the standard SOR iteration (\ref{SOR}) stated in Theorem \ref{theo1}, at least asymptotically. These estimates are formulated in terms of the energy semi-norm associated with $B$, and are equivalent to estimates in the usual Euclidian norm for the Kaczmarz iteration applied to a consistent linear system $Ax=b$, where $B=AA^{\ast}$. The result is summarized in the following

\begin{Thm}\label{ThmSOR} Let $By=b$ be a consistent linear system with positive semi-definite $B=L+I+L^{\ast}\in \mathscr H_n$, and denote by
$\bar{y}$ an arbitrary solution of it. Fix any $\omega\in (0,2)$.\\[0.5ex]
a) The expected squared energy semi-norm error of the shuffled SOR iteration converges exponentially with the bound
$$
\mathbb{E}(|\bar{y} - y^{(k)}|_B^2)\le \left(1- \frac{\omega(2-\omega)\lambda_1}{(1+\omega\lambda_1)^2\bar{\kappa}} \right)^k |\bar{y}- {y}^{(0)}|_B^2, \quad k\ge 1,
$$
for any $\omega\in (0,2)$.\\[0.5ex]
b)  There exists some ordering $\sigma$ such that the classical SOR iteration on the system $B_{\sigma} y_\sigma =b_{\sigma}$ converges for any $\omega\in (0,2)$with square energy semi-norm error decay
$$
|\bar{y} - y^{(k)}|_B^2\le \left(1- \frac{\omega(2-\omega)\lambda_1}{(1+C_1\omega\lambda_1)^2\bar{\kappa}} \right)^k |\bar{y}- {y}^{(0)}|_B^2, \quad k\ge 1,
$$
where the constant $C_1$ satisfies (\ref{KSL}). 
\end{Thm}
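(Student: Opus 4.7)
The plan for part (a) starts from the deterministic one-step SOR identity recalled in the paper just after Theorem \ref{theo1}, applied to the permuted system $B_\sigma y_\sigma = b_\sigma$. First I would translate a single shuffled step back to the original coordinates: using unitarity of $P_\sigma$, the identities $|P_\sigma^\ast u|_B = |u|_{B_\sigma}$ and $B_\sigma P_\sigma = P_\sigma B$, and setting $e^{(k)} := y^{(k)}-\bar y$, one reaches
$$
|e^{(k+1)}|_B^2 = |e^{(k)}|_B^2 - \omega(2-\omega)\,\|(I+\omega L_\sigma)^{-1} P_\sigma B e^{(k)}\|^2.
$$
With $w := B e^{(k)}$ deterministic given $e^{(k)}$, the task reduces to producing a lower bound, independent of $n$, for $\mathbb{E}[\,\|(I+\omega L_\sigma)^{-1} P_\sigma w\|^2\,]$.

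To obtain this bound I would introduce the Gram matrix $M_\sigma := (I+\omega L_\sigma)(I+\omega L_\sigma)^\ast$, note it is positive definite, and rewrite
$$
\|(I+\omega L_\sigma)^{-1} P_\sigma w\|^2 = \langle (P_\sigma^\ast M_\sigma P_\sigma)^{-1} w, w\rangle,
$$
so that all randomness is encoded in a single random positive definite matrix. Operator convexity of $X\mapsto X^{-1}$ on positive definite matrices then yields the operator Jensen inequality $\mathbb{E}[(P_\sigma^\ast M_\sigma P_\sigma)^{-1}]\succeq (\mathbb{E}[P_\sigma^\ast M_\sigma P_\sigma])^{-1}$. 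A direct expansion $M_\sigma = (1-\omega)I + \omega B_\sigma + \omega^2 L_\sigma L_\sigma^\ast$, followed by conjugation with $P_\sigma$, produces the clean identity
$$
\mathbb{E}[P_\sigma^\ast M_\sigma P_\sigma] = (1-\omega)I + \omega B + \omega^2\,\mathbb{E}[LL^\ast],
$$
which is exactly where Theorem \ref{ThmSumM} plugs in: $\mathbb{E}[LL^\ast] \preceq \lambda_1^2 I$. Combining this with $\lambda_1\ge 1$ (forced by $D=I$) and Weyl's inequality gives
$$
\|\mathbb{E}[P_\sigma^\ast M_\sigma P_\sigma]\| \le (1-\omega)+\omega\lambda_1+\omega^2\lambda_1^2 \le (1+\omega\lambda_1)^2,
$$
so that $\mathbb{E}[\,\|(I+\omega L_\sigma)^{-1} P_\sigma w\|^2\,] \ge \|w\|^2/(1+\omega\lambda_1)^2$. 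The routine finish uses $\|B e^{(k)}\|^2\ge \lambda_r |e^{(k)}|_B^2$ (valid because the error stays in $\mathrm{Ran}(B)$ for consistent systems), $\lambda_r = \lambda_1/\bar\kappa$, and iteration of the unconditional expectation.

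Part (b) is a deterministic corollary. Here I would fix once and for all a permutation $\sigma$ guaranteed by Theorem \ref{ThmMinNorm} with $\|L_\sigma\|\le C_1\lambda_1$, and apply the same single-step SOR identity on $B_\sigma y_\sigma = b_\sigma$, combined with $\|I+\omega L_\sigma\|\le 1+C_1\omega\lambda_1$ and $\|B e^{(k)}\|^2\ge \lambda_r |e^{(k)}|_B^2$. The main obstacle lies in part (a): pushing the randomness through both the inverse $(I+\omega L_\sigma)^{-1}$ and the permutation $P_\sigma$ without incurring a logarithmic loss. The key observation is that conjugation by $P_\sigma$ commutes with inversion, so that $M_\sigma^{-1}$ fits directly into the averaging framework of Section \ref{sec2}; the spectral bound then emerges from $\|\mathbb{E}[LL^\ast]\|\le\lambda_1^2$ alone, without any appeal to a pointwise bound on $\|L_\sigma\|$, which is precisely what removes the $\tfrac12\log_2(2n)$ factor present in Theorem \ref{theo1}.
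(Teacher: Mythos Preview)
Your proposal is correct and, for Part (b), matches the paper exactly. For Part (a) you follow the same overall architecture as the paper—establish the one-step identity, reduce to a lower bound on $\mathbb{E}[\|(I+\omega L_\sigma)^{-1}P_\sigma w\|^2]$, expand $M_\sigma=(1-\omega)I+\omega B_\sigma+\omega^2 L_\sigma L_\sigma^\ast$, and invoke Theorem~\ref{ThmSumM} to control $\mathbb{E}[LL^\ast]$—but the bridging inequality you use is different. The paper works scalar-wise: it applies $\langle Sy,y\rangle\langle S^{-1}y,y\rangle\ge 1$ to each $\sigma$ separately, and then the arithmetic--harmonic mean inequality to the resulting scalars. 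You instead package everything into the random positive definite matrix $P_\sigma^\ast M_\sigma P_\sigma$ and invoke operator convexity of $X\mapsto X^{-1}$ (operator Jensen) to pass the expectation through the inverse in one step. Both routes land on the identical bound $\|\mathbb{E}[P_\sigma^\ast M_\sigma P_\sigma]\|\le (1+\omega\lambda_1)^2$; your version is slightly more streamlined and makes the role of Theorem~\ref{ThmSumM} more transparent (it enters as an operator inequality $\mathbb{E}[LL^\ast]\preceq\lambda_1^2 I$ rather than through a quadratic form). One small remark: your parenthetical ``the error stays in $\mathrm{Ran}(B)$'' is not quite what justifies $\|Be^{(k)}\|^2\ge\lambda_r|e^{(k)}|_B^2$; that inequality holds for any vector, since the energy semi-norm already annihilates the $\mathrm{Ker}(B)$ component.
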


\begin{proof} We start with b). Take the $\sigma$ for which $\|L_{\sigma}\|\le C_1\|B\|$ according to (\ref{KSL}).  To simplify notation, let us drop the subscript $\sigma$ so that now $B_{\sigma}=B=L+I+L^{\ast}$, $b_{\sigma}=b$, $P_{\sigma}=I$, and $\|L\|\le C_1\|B\|$. Recall
the notation $Q =I-\omega(I+\omega L)^{-1}B$ for the error iteration matrix,
and check that $\mathbb{C}^n= U\oplus V$, where $U=\mathrm{Ker}(B)$ and $V=(I+\omega L)^{-1}\mathrm{Ran}(B)$ are $Q$-invariant subspaces (obviously, $Q$ is the identity when restricted to $U$). Write the SOR iterates as
$y^{(k)}=u^{(k)}+v^{(k)}$, $u^{(k)}\in U$, $v^{(k)}\in V$. Since $y^{(k+1)}=Qy^{(k)}+ \omega(I+\omega L)^{-1}b$, by induction it follows that 
\begin{equation}\label{Rec}
u^{(k)}=u^{(0)}, \qquad v^{(k)}=Q^kv^{(0)} + \omega(I+Q+\ldots +Q^{k-1})(I+\omega L)^{-1}b, \qquad k\ge 1.
\end{equation}
Now, any solution $\bar{y}$ of $By=b$ can be written as $\bar{y}=u+\bar{v}$, where $u\in U$ is arbitrary, and $\bar{v}\in V$ is unique.
Because
$$
|\bar{y}-y^{(k)}|_B^2=\langle B(\bar{y}-y^{(k)}),\bar{y}-y^{(k)}\rangle=\langle B(\bar{v}-v^{(k)}),\bar{v}-v^{(k)}\rangle=|\bar{v}-v^{(k)}|_B^2,
$$
and $\bar{v}-v^{(k+1)}=\bar{v}-Qv^{(k)}-\omega(I+\omega L)^{-1}B\bar{v}=Q(\bar{v}-v^{(k)})$, all we need is an estimate of the form
$$
|Qv|_B^2 \le \rho |v|_B^2,\qquad v\in V.
$$
By substituting $\omega B= (I+\omega L)+(I+\omega L^{\ast}) - (2-\omega)I$ below, we get
\begin{eqnarray*}
|Qv|_B^2&=&\langle Bv,v\rangle-\omega\langle B((I+\omega L)^{-1}+(I+\omega L^{\ast})^{-1})Bv,v\rangle\\
&&\qquad\qquad\qquad +\omega\langle B(I+\omega L^{\ast})^{-1}(\omega B)(I+\omega L)^{-1}Bv,v\rangle\\
&=&\langle Bv,v\rangle- \omega(2-\omega)\|(I+\omega L)^{-1}Bv\|^2.
\end{eqnarray*}
Using (\ref{KSL}), the last term can be bounded from below as
$$
\|(I+\omega L)^{-1}Bv\|^2\ge \frac{\|Bv\|^2}{\|I+\omega L\|^2}\ge \frac{\lambda_r|v|_B^2}{(1+\omega C_1\|B\|)^2}=
\frac{\lambda_1|v|_B^2}{(1+\omega C_1\lambda_1)^2\bar{\kappa}}.
$$
Therefore, we obtain 
$$
\rho=1-\frac{\omega(2-\omega)\lambda_1}{(1+\omega C_1\lambda_1)^{2}\bar{\kappa}},
$$
which gives the bound stated in Part b). Moreover, since $\|v\|$ and $|v|_B$ are equivalent norms on $V$, we see that $v^{(k)}\to \bar{v}$. 
According to (\ref{Rec})
$$
y^{(k)} \to u^{(0)} + \bar{v},
$$
so the SOR iteration converges in the usual sense as well, with the $U=\mathrm{Ker}(B)$ component in the limit depending on the starting vector $y^{(0)}$
if $B$ is singular. Returning to the original formulation as preshuffled SOR iteration, the $\mathrm{Ker}(B)$ component of the limit would also depend on $\sigma$.
\\[1ex]
The result of Part a) requires a similar, yet slightly more subtle analysis. Recall that in each step of the shuffled iteration, given the current iterate $y^{(k)}$, we choose a permutation $\sigma$ at random, apply the SOR step (with matrix $B_{\sigma}=P_{\sigma} B P^{\ast}_{\sigma}$ and right-hand side $b_{\sigma}=P_{\sigma} b$) to 
$P_{\sigma} y^{(k)}$, and return afterwards to the original ordering by multiplying with $P^{\ast}_{\sigma}$. In other words, the iteration step is now
\begin{eqnarray*}
y^{(k+1)} &=&P_{\sigma}^{\ast}[(I-\omega(I+\omega L_{\sigma})^{-1}B_{\sigma}) P_{\sigma} y^{(k)} +\omega (I+\omega L_{\sigma})^{-1}b_{\sigma}]\\
&=&
(\underbrace{I - \omega P_{\sigma}^{\ast}(I+\omega L_{\sigma})^{-1}P_{\sigma} B}_{=Q_{\sigma}})y^{(k)} +\omega P_{\sigma}^{\ast}(I+\omega L_{\sigma})^{-1}P_{\sigma} b.
\end{eqnarray*}
Thus, as before
$$
|e_\sigma^{(k+1)}|_B^2=|Q_{\sigma}(\bar{y}-y^{(k)})|_B^2 =|e^{(k)}|_B^2 - \omega(2-\omega)\|(I+\omega L_{\sigma})^{-1}P_{\sigma} B(e^{(k)})\|^2,
$$
where for short we have set  $e_\sigma^{(k+1)}:=\bar{y}-y^{(k+1)}$ and $e^{(k)}:=\bar{y}-y^{(k)}$ (indicating that $y^{(k+1)}$ depends on $\sigma$, while
$y^{(k)}$ is considered fixed at the moment).
The expected square semi-norm error after $k+1$ iterations (conditioned on the error $e^{(k)}$) is thus
\begin{equation}\label{Intermed}
\frac1{n!}\sum_{\sigma} |e_\sigma^{(k+1)}|_B^2 = |e^{(k)}|_B^2 - \frac{\omega(2-\omega)}{n!}\sum_{\sigma} \|(I+\omega L_{\sigma})^{-1}P_{\sigma} Be^{(k)}\|^2.
\end{equation}

We give a lower estimate for the last term in (\ref{Intermed}) with $Be^{(k)}$ temporarily replaced by any unit vector $z$. Since for positive
definite $S\in \mathscr H_n$ we have
$$
\langle Sy,y\rangle\langle S^{-1}y,y\rangle\ge 1,\qquad \|y\|=1,
$$  
(indeed, $1= \|y\|^4=\langle S^{1/2}y,S^{-1/2}y\rangle^2\le \|S^{1/2}y\|^2\|S^{-1/2}y\|^2$), applying this inequality with $S=(I+\omega L_{\sigma})(I+\omega L_{\sigma}^{\ast})$ and
$y=P_{\sigma} z$, we get 
$$
\|(I+\omega L_{\sigma})^{-1}P_{\sigma} z\|^{-2}\le \|(I+\omega L_{\sigma}^{\ast})P_{\sigma} z\|^{2} =\langle\|z\|^2 +\omega (Hz,z) + \omega^2 (\frac1{n!}\sum_{\sigma}P_{\sigma}^{\ast} L_{\sigma} L_{\sigma}^{\ast} P_{\sigma}) z, z\rangle 
$$
where as before $H=B-I=L+L^{\ast}$. Thus, by the arithmetic-harmonic-mean inequality,
\begin{eqnarray*}
\frac1{n!}\sum_{\sigma} \|(I+\omega L_{\sigma})^{-1}P_{\sigma} z\|^2
&\ge& n!\left(\sum_{\sigma} \|(I+\omega L_{\sigma})^{-1}P_{\sigma} z\|^{-2}\right)^{-1}\\
&\ge& n!\left(\sum_{\sigma} \|(I+\omega L_{\sigma}^{\ast})P_{\sigma} z\|^{2}\right)^{-1}\\
&=& \left( (\|z\|^2 +\omega (Hz,z) + \omega^2 (\frac1{n!}\sum_{\sigma}P_{\sigma}^{\ast} L_{\sigma} L_{\sigma}^{\ast} P_{\sigma} z, z) \right)^{-1}.
\end{eqnarray*}
By Theorem \ref{ThmSumM}, the sum in the last expression can be estimated by
$$
\|z\|^2 +\omega (Hz,z) + \omega^2 (\frac1{n!}\sum_{\sigma}P_{\sigma}^{\ast} L_{\sigma} L_{\sigma}^{\ast} P_{\sigma} z, z)\le (1+w\|H\|)^2 \le (1+\omega\lambda_1)^2.
$$
This gives the needed auxiliary result
$$
\frac1{n!}\sum_{\sigma} \|(I+\omega L_{\sigma})^{-1}P_{\sigma} z\|^2\ge (1+\omega\lambda_1)^{-2},\qquad \|z\|=1.
$$
Going back to the notation of (\ref{Intermed}), we therefore have
\begin{equation}\label{Intermed1}
\frac1{n!}\sum_{\sigma} |e^{(k+1)}_\sigma|_B^2 = |e^{(k)}|_B^2 - \frac{\omega(2-\omega)}{(1+\omega\lambda_1)^2}\|Be^{(k)}\|^2\le
\left(1-\frac{\omega(2-\omega)\lambda_r}{(1+\omega\lambda_1)^2}\right)|e^{(k)}|_B^2,
\end{equation}
which implies the desired estimate for the expected square energy semi-norm error after one iteration step, conditioned on the previous iterate. Since the random choice of
$\sigma$ is considered independent from iteration step to iteration step, we can take the expectation of $|e^{(k)}|_B^2$ in (\ref{Intermed1}) and arrive at the statement of Part a). Finally, we note that for singular $B$, the result of Part a) only implies that the unique solution component in $\mathrm{Ran}(B)$ is recovered at an exponential rate from the iterates (in expectation). 
\end{proof}

We conclude with a few further comments on the estimates for shuffled SOR iterations obtained in Theorem \ref{ThmSOR}. First of all, they are worst-case upper bounds for the class of all
consistent systems $By=b$ with Hermitian positive semi-definite matrix $B$ and normalization condition $D=I$. As such, they improve upon the worst-case
upper bounds for fixed cyclic ordering from Theorem \ref{theo1}, at least in the asymptotic regime $n\to \infty$. The current estimate $C_1\le 32.42$ entering
the bound for the preshuffled SOR iteration is certainly too pessimistic compared to our numerical experience reported in \cite{oswald2015convergence}, it is due to our reliance on Theorem \ref{LmPave} for which currently only suboptimal quantitative versions, i.e., crude estimates for $\gamma(\epsilon)$, are available. Finding better estimates for $\gamma(\epsilon)$ and the constant $C_1$ in (\ref{KSL}), or replacing the use
of simple norm estimates for $L$ by more subtle techniques, would be desirable. We leave this for future work.
\\[1ex]
Another issue is the formal superiority of the bound (\ref{SVest}) for the single-step randomized SOR iteration compared to our results which is not
reflected in the actual performance of the methods in many tests, where shuffled and preshuffled SOR iterations compete well.
The appearance of an additional factor $\lambda_1$ in the denominator of our convergence rate estimates in Theorems \ref{theo1} and \ref{ThmSOR} 
compared to (\ref{SVest}) is inherent to our approach of analyzing the error reduction per sweep rather than estimating the single-step error reduction. 
Due to the assumed normalization $D=I$,
we have $1\le \lambda_1\le n$, however in many practical cases (and for typical ensembles of random matrices) the actual value of $\lambda$ remains close to
$1$ which partly mitigates the issue. We conclude with an academic example showing that the extra $\lambda_1$ factor in the denominator of the bound in Theorem \ref{theo1} is necessary (whether this is also true for the bounds in Theorem \ref{ThmSOR} remains open).
\\[1ex]
For each $m\in \mathbb{N}$, consider the homogenous linear system $By=0$, where $B=AA^\ast$ is induced by the $2m\times 2$ matrix $A$
with unit norm row vectors $a_j$ given by
\begin{equation}
a_j=\begin{pmatrix}\cos((j-1)\theta_m), & \sin((j-1)\theta_m)\end{pmatrix},\quad j=1,\ldots, 2m,\qquad \theta_m:=\frac{\pi}{2m}.
\end{equation}
It is easy to check that $A^\ast A =m I$. Thus, $B$ has rank $r=2$, essential condition number $1$, and spectral norm $\|B\|=\lambda_1=m$. 
As mentioned in the introduction, applying the Gau{\ss}-Seidel method ($\omega=1$) to $By=0$ is the same as applying the Kaczmarz aka ADM method to $Ax=0$. From a geometric point of view (see the figure below), since the $2m$ hyperplanes the ADM method for $Ax=0$ projects on split the plane with equal angles $\theta_m$, the error reduction rate per single step of the ADM iteration with cyclic ordering is simply $\cos\theta_m$, and becomes increasingly slow as $m$ grows. The convergence rate of the squared error per sweep is thus 
$$
(\cos\theta_m)^{2m} \approx (1-\frac{\pi^2}{8m^2})^{2m} \approx 1-\frac{\pi^2}{4m},\qquad m\to \infty.
$$
This shows that without the $\lambda_1=m$ factor in the denominator of the bound (\ref{SORest}) from Theorem \ref{theo1} we would arrive at a contradiction.
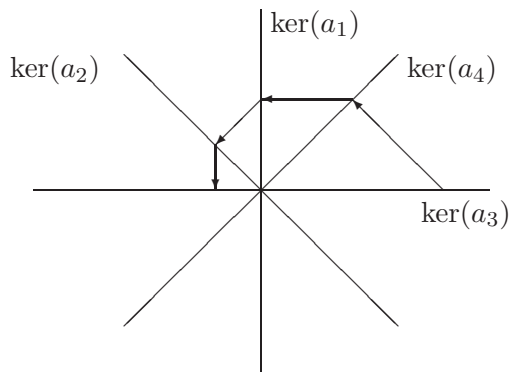
\begin{figure}[H]
\setlength{\unitlength}{0.6cm}
\begin{picture}(7,8.5)(-10,-4)
\put(3,0){\line(1,0){5}}
\put(3,0){\line(0,1){4}}
\put(3,0){\line(1,1){3}}
\put(3,0){\line(-1,1){3}}
\put(3,0){\line(-1,0){5}}
\put(6.5,-0.7){\text{$\ker(a_3)$}}
\put(6.2,2.5){\text{$\ker(a_4)$}}
\put(3.2,3.5){\text{$\ker(a_1)$}}
\put(-2.5,2.5){\text{$\ker(a_2)$}}
\put(7,0){\vector(-1,1){2}}
\put(5,2){\vector(-1,0){2}}
\put(3,2){\vector(-1,-1){1}}
\put(2,1){\vector(0,-1){1}}
\put(3,0){\line(0,-1){4}}
\put(3,0){\line(-1,-1){3}}
\put(3,0){\line(1,-1){3}}
\end{picture} 
\caption{Hyperplanes for ADM example with $m=4$} 
\label{FigurePlaneSplitting}
\end{figure}

\bibliographystyle{plain}
\bibliography{ShufflingExplanationPO1}

\end{document}